\documentclass{amsart}
\usepackage{amssymb,amsmath,amsrefs}
\usepackage[colorlinks=true]{hyperref}
\hypersetup{urlcolor=blue, citecolor=green}
\usepackage[dvips]{graphicx}
\usepackage{color}

\bibliographystyle{amsplain}

\theoremstyle{plain}

\newtheorem{mainthm}{Theorem}

\newtheorem*{conj*}{Conjecture}
\newtheorem*{cor*}{Corollary}
\newtheorem{theorem}{Theorem}[section]

\newtheorem{prop}[theorem]{Proposition}
\newtheorem{proposition}[theorem]{Proposition}

\newtheorem{lemma}[theorem]{Lemma}

\newtheorem{question}{Question}

\theoremstyle{definition}
\newtheorem*{def*}{Definition}
\newtheorem{remark}[theorem]{Remark}

\newtheorem{example}[theorem]{Example}

\newtheorem{definition}{Definition}

\renewcommand{\epsilon}{\varepsilon}

\newcommand{\Z}{\mathbb{Z}}
\newcommand{\N}{\mathbb{N}}
\newcommand{\R}{\mathbb{R}}
\newcommand{\eps}{\varepsilon}


\newcommand{\dist}{\operatorname{\textit{d}}}
\newcommand{\clos}{\operatorname{clos}}

\newcommand{\diam}{\operatorname{diam}}

\title[Countably expansive homeomorphisms with the shadowing property]{Countably and entropy expansive homeomorphisms with the shadowing property.}

\author[A. Artigue, B. Carvalho, W. Cordeiro and J. Vieitez]{Alfonso Artigue, Bernardo Carvalho, Welington Cordeiro \\ and Jos\'e Vieitez}
\date{\today}

\thanks{2010 \emph{Mathematics Subject Classification}: Primary 37D10; Secondary 37B99.}
\keywords{Topological hyperbolicity, generalizations of expansiveness, shadowing property.}

\begin{document}
\begin{abstract}
We discuss the dynamics beyond topological hyperbolicity considering homeomorphisms satisfying the shadowing property and generalizations of expansivity. It is proved that transitive countably expansive homeomorphisms satisfying the shadowing property are expansive in the set of transitive points. This is in contrast with pseudo-Anosov diffeomorphisms of the two-dimensional sphere that are transitive, cw-expansive, satisfy the shadowing property but the dynamical ball in each transitive point contains a Cantor subset. We exhibit examples of countably expansive homeomorphisms that are not finite expansive, satisfy the shadowing property and admits an infinite number of chain-recurrent classes. We further explore the relation between countable and entropy expansivity and prove that for surface homeomorphisms $f\colon S\to S$ satisfying the shadowing property and $\Omega(f)=S$, both countably expansive and entropy cw-expansive are equivalent to being topologically conjugate to an Anosov diffeomorphism.
\end{abstract}

\maketitle
\section{Introduction and Statement of Results}

The study of dynamical systems beyond the hyperbolic ones is today a very active and productive topic in the theory of differentiable dynamical systems (see \cite{BDV} for a description of the main features and results of the theory). 
A classical way to understand dynamics is through the viewpoint of topological dynamics, and a topological notion of hyperbolicity is widely accepted in the literaure: expansive homeomorphisms satisfying the shadowing propery (see Aoki and Hiraide monograph \cite{AH}). Thus, it is natural to consider generalizations of topological hyperbolicity and explore the world \emph{beyond topological hyperbolicity} (see \cite{ACCV}). In this paper, we study the dynamics of systems beyond the topologically hyperbolic ones considering homeomorphisms satisfying the shadowing property and generalizations of expansivity, and these are many: n-expansivity \cites{Art,APV,CC,CC2,LZ,Morales}, finite expansivity \cite{CC}, countable and measure expansivity \cites{Art,CDX,ArCa}, cw-expansivity \cites{Kato93,Kato93B}, entropy expansiveness \cites{Bowen,PaVi}, among others (see Section 2 for precise definitions).

The understanding of the structure of local stable and unstable sets is the main step to study the dynamics of each of these class of systems. 
In the hyperbolic systems, the Stable Manifold Theorem assures that local stable and unstable sets are manifolds, leaves of a pair of transverse foliations, that contracts and expands exponentially. In topologically hyperbolic systems local stable and unstable sets can be modeled with Hiraide's generalized foliations (see \cite{Hiraide2} and \cite{Hiraide}). In both classes, one important fact is the existence of $\eps>0$ such that the $\eps$-stable set of $x$  is contained in the stable set of $x$ and also the $\eps$-unstable set of $x$ is contained in the unstable set of $x$, that is, $W^s_{{\eps}}(x)\subset W^s(x)$ and $W^u_{{\eps}}(x)\subset W^u(x)$ for every $x\in X$ (see \cite{Ma} for a proof and Section 2 for definitions).

If we consider generalizations of topological hyperbolicity, local stable and unstable sets can be much more complicated. One example enlightening this situation is the Pseudo-Anosov diffeomorphisms of the sphere $\mathbb{S}^2$ constructed as follows. Consider an Anosov diffeomorphism $f_A$ of the torus $\mathbb{T}^2$ induced by a hyperbolic $2\times2$ matrix with integer coefficients and determinant one. The sphere $\mathbb{S}^2$ can be seen as the quotient of $\mathbb{T}^2$ by the antipodal map and then $f_A$ induces a diffeomorphism $g_A\colon \mathbb{S}^2\to\mathbb{S}^2$. The map $g_A$ constructed this way has many interesting dynamical properties that are not clear at a first glance. It is transitive, cw-expansive, satisfies the shadowing property but is not countably expansive. Indeed, it contains arbitrarily small dynamical balls containing hyperbolic horseshoes (see \cites{PaVi,AAV,ArtigueDend,Walters} for more details). Also, the local stable set of each forward transitive point contains a stable continuum and also a cantor subset contained in the unstable continuum (an in particular in the dynamical ball) as explained in the proof of Proposition 2.2.2 of \cite{ArtigueDend}.

A second class of examples with a wild behaviour on local stable and unstable sets are the n-expansive (and finite expansive) homeomorphisms with the shadowing property in \cite{CC}. These examples are constructed as the union of a topologically hyperbolic homeomorphism and a sequence of periodic orbits such that for each $\eps>0$ there exists n different periodic points with the same period in a common $\eps$-stable set. Hence, there are exactly n diffenrent stable sets intersecting the same local stable set (the same holds for the $\eps$-unstable set). These phenomena allow the existence of infinitely many distinct chain recurrent classes, in contrast with the topologically hyperbolic case. Inspired in these examples we construct examples of countably expansive homeomorphisms with the shadowing property containing a countable number of different stable sets in the same local stable set and an infinite number of chain recurrent classes (see Example \ref{Example1} in the end of the paper). 

For countably expansive homeomorphisms, 
the first phenomenon of the pseudo-Anosov of the sphere (explained above) is excluded by definition and the second one appears in systems with an infinite number of chain recurrent classes. Then assuming countable expansivity, transitivity and the shadowing property, we prove expansiveness in the set of transitive points. First, we let $T^+$ denote the set of forward transitive points, $T^-$ denote the set of backward transitive points, $T=T^+\cap T^-$ the set of transitive points and $\Gamma_{\eps}(x)=W^s_{{\eps}}(x)\cap W^u_{{\eps}}(x)$ denote the dynamical ball of $x$ with radius $\eps$.

\begin{mainthm}
\label{thmEstablesEnT}
If a homeomorphism $f\colon X\to X$ of a compact metric space is transitive, countably expansive and satisfies the shadowing property, then there exists $\eps>0$ such that:
\begin{enumerate}
\item $W^s_{{\eps}}(x)\subset W^s(x)$ for every $x\in T^{+}$,
\item $W^u_{{\eps}}(x)\subset W^u(x)$ for every $x\in T^{-}$ and
\item $\Gamma_{\eps}(x)=\{x\}$ for every $x\in T$.
\end{enumerate}
\end{mainthm}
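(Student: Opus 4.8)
The plan is to fix a countable–expansivity constant $c>0$ for $f$ (so that $\Gamma_c(z)$ is countable for every $z\in X$), to take $\eps>0$ small, at least $\eps\le c/2$, and, for each $\beta>0$, to let $\alpha(\beta)>0$ be a shadowing constant, i.e.\ such that every $\alpha(\beta)$-pseudo-orbit is $\beta$-shadowed. Observe first that $f^{-1}$ is again transitive, countably expansive with the same $c$, and has the shadowing property, and that $T^{-}(f)=T^{+}(f^{-1})$, while $W^u_\eps(\cdot)$ and $W^u(\cdot)$ for $f$ are $W^s_\eps(\cdot)$ and $W^s(\cdot)$ for $f^{-1}$. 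Hence (2) for $f$ is exactly (1) for $f^{-1}$, so it is enough to prove (1) for every such $f$, to replace $\eps$ by the smaller of the two resulting constants (the conclusions are monotone in $\eps$), and then to deduce (3). I would build everything on one construction, the \emph{engine}.

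\emph{The engine: there is no recurrent $a\in X$ and no $b\ne a$ with $b\in\Gamma_\eps(a)$ and $d(f^na,f^nb)\to0$ as $n\to\pm\infty$.} Indeed, put $r=d(a,b)>0$, choose $\beta<r/3$ with $\eps+2\beta\le c$, set $\alpha=\alpha(\beta)/3$, pick $N$ with $d(f^na,f^nb)<\alpha$ for all $|n|\ge N$, and (since $f^{-N}a$ is recurrent) pick $M>2N$ with $d(f^{M}(f^{-N}a),f^{-N}a)<\alpha$. Let $B_0=(f^{-N}a,f^{-N+1}a,\dots,f^{M-N-1}a)$ and $B_1=(f^{-N}b,f^{-N+1}b,\dots,f^{M-N-1}b)$, two orbit blocks of length $M$. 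By the above choices, the first entry and the $f$-image of the last entry of $B_0$ and of $B_1$ all lie within $3\alpha$ of each other; hence, for every $s=(s_k)_{k\in\Z}\in\{0,1\}^{\Z}$, the sequence $\xi^s$ that equals $B_{s_k}$ on the time window $[kM,(k+1)M-1]$ is a $3\alpha$-pseudo-orbit. Let $z^s$ $\beta$-shadow it. If $s,s'$ differ in a coordinate $k$, then at the centre of the $k$-th window the entries of $\xi^s$ and $\xi^{s'}$ are $a$ and $b$, so the orbits of $z^s$ and $z^{s'}$ are at distance $\ge r-2\beta>0$ there, whence the points $z^s$ are pairwise distinct; at every time the entries of $\xi^s$ and $\xi^{s'}$ are $f^{j}a$ and $f^{j}b$ for one and the same $j$, hence at distance $\le\eps$, so $d(f^nz^s,f^nz^{s'})\le\eps+2\beta\le c$ for all $n\in\Z$. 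Then $\Gamma_c(z^{s_0})$ contains the uncountable family $\{z^s:s\in\{0,1\}^{\Z}\}$, contradicting countable expansivity.

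\emph{Deduction of (3), and the plan for (1).} If $x\in T$ then $x$ is recurrent; for $y\in\Gamma_\eps(x)$, items (1) and (2) give $y\in W^s(x)\cap W^u(x)$, i.e.\ $d(f^nx,f^ny)\to0$ as $n\to\pm\infty$, so the engine (with $a=x$, $b=y$) forces $y=x$. For (1), fix $x\in T^{+}$ and $y\in W^s_\eps(x)$ and assume $y\notin W^s(x)$; then for some $\delta_0>0$ there are $n_k\uparrow\infty$ with $d(f^{n_k}x,f^{n_k}y)\ge\delta_0$, and, passing to a subsequence, $f^{n_k}x\to a$ and $f^{n_k}y\to b$. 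Since $n_k\to\infty$, the bound $d(f^{n_k+m}x,f^{n_k+m}y)\le\eps$ (valid for every $m\ge-n_k$) passes to the limit and gives $d(f^ma,f^mb)\le\eps$ for all $m\in\Z$, so $b\in\Gamma_\eps(a)$ and $d(a,b)\ge\delta_0>0$. It remains to upgrade $(a,b)$ to a pair to which the engine applies, i.e.\ with $a$ recurrent and with $f^na,f^nb$ becoming $\alpha$-close as $|n|\to\infty$. I would get the first by working inside the $\omega$-limit set $L$ of $(x,y)$ in $X\times X$: $L$ is internally chain transitive, hence contains a minimal set, every point of which is recurrent and still has "the second coordinate lies in $\Gamma_\eps$ of the first"; a nontrivial such pair is then transported back into $X$ using that $(X,f)$ is chain transitive with the shadowing property, so that chain-recurrence is realised by genuine recurrence.

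\emph{Main obstacle.} The delicate step is the second requirement above — forcing $d(f^na,f^nb)\to0$, or even merely $<\alpha$ infinitely often, for a nontrivial pair $b\in\Gamma_\eps(a)$. One nontrivial pair alone does not drive the engine, because within a pseudo-orbit a switch from the orbit of $a$ to that of $b$ costs $d(f^na,f^nb)\le\eps$, which far exceeds the shadowing tolerance $\alpha$ unless it is made where the two orbits are already $\alpha$-close. Equivalently, one has to exclude a compact $f\times f$-invariant subset of $X\times X$ trapped in a fixed annular neighbourhood of the diagonal, and this is exactly the point where countable expansivity and the shadowing property must be played against each other. The rest — the duality yielding (2), the deduction of (3), and the bookkeeping in the engine — is routine once this is settled.
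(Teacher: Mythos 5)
Your ``engine'' is sound --- it is essentially a self-contained proof of the special case of Lemma 3.1 of \cite{ACCV} (Lemma \ref{lemaWillyBarney} in the paper) that the argument needs --- and your reductions of (2) to (1) via $f^{-1}$ and of (3) to (1)$+$(2) are correct and match the paper. But your proof of (1) is incomplete, and you say so yourself: from $y\in W^s_\eps(x)\setminus W^s(x)$ you extract a limit pair $(a,b)$ with $b\in\Gamma_\eps(a)\setminus\{a\}$, and you cannot force $d(f^na,f^nb)\to 0$, so the engine never fires. This is a genuine gap, not bookkeeping: a nontrivial pair $b\in\Gamma_\eps(a)$ with uniformly separated orbits is perfectly compatible with countable expansivity, shadowing and $\Omega(f)=X$ (see Example \ref{Example1}), so no manipulation of the single pair $(a,b)$ can produce a contradiction; some additional input from countable expansivity is required.

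The missing idea is Proposition \ref{propGammasTriviales}: countable expansivity \emph{alone} forces the set of points $z$ with $\Gamma_\eps(z)=\{z\}$ for some $\eps>0$ to be dense. (If every point of some ball were an accumulation point of each of its own dynamical balls, one could nest the sets $\Gamma_{\delta/2^n}$ to build a perfect, hence uncountable, subset of a single $\Gamma_\delta(x)$.) Transitivity then places one such $z$ in $\omega(x)$ for every $x\in T^{+}$, and the paper's Lemma \ref{lemaQuadro} runs a dichotomy that sidesteps your obstacle entirely: either the forward orbits of $x$ and $y$ come $\delta$-close at two times with an $\eps$-separation in between --- then Lemma \ref{lemaWillyBarney} applies directly, because its hypothesis is only ``close, far, close again'' at finitely many times, not asymptotic closeness --- or else $d(f^k(x),f^k(y))>r$ for all $k\geq 0$; in that case the limits $z=\lim f^{i_k}(x)$ and $z_y=\lim f^{i_k}(y)$ satisfy $z_y\in\Gamma_\eps(z)$ and $d(z,z_y)\geq r>0$, contradicting $\Gamma_\eps(z)=\{z\}$. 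So the repair is not to upgrade $(a,b)$ to an asymptotic pair, but (i) to prove the existence of an expansive point by the perfect-set argument above, and (ii) to weaken the hypothesis of your engine from ``$d(f^na,f^nb)\to0$'' to ``$\delta$-close at times $0$ and $n$, $\eps$-far in between with $a$ non-wandering,'' which your block construction already delivers after cosmetic changes.
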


In the case $X=S$ is a compact surface we prove these systems must be topologically conjugate to Anosov diffeomorphisms. 

\begin{mainthm}\label{3.3}
For a homeomorphism of a compact surface $f\colon S\to S$ satisfying the shadowing property and $\Omega(f)=S$,
the following are equivalent:
\begin{enumerate}
 \item $f$ is conjugate to an Anosov diffeomorphism,
 \item $f$ is countably expansive,
 \item $f$ is entropy cw-expansive.
\end{enumerate}
\end{mainthm}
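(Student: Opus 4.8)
The plan is to prove $(1)\Rightarrow(2)$, $(1)\Rightarrow(3)$, $(2)\Rightarrow(1)$ and $(3)\Rightarrow(1)$. The first two implications are immediate: an Anosov diffeomorphism is expansive, so $\Gamma_{\eps}(x)=\{x\}$ for every $x$, hence $f$ is countably expansive and, a fortiori, entropy cw-expansive. All the content therefore lies in the two implications into $(1)$, and I would treat them by a single argument.

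Its first ingredient is a reduction to topological mixing. Since $f$ has the shadowing property, $\Omega(f)$ coincides with the chain recurrent set, so the hypothesis $\Omega(f)=S$ says that $S$ is chain recurrent; being connected, $S$ is then chain transitive. A chain transitive homeomorphism with the shadowing property decomposes into clopen sets $S_{0},\dots,S_{k-1}$, cyclically permuted by $f$, with $f^{k}|_{S_{i}}$ topologically mixing, and connectedness of $S$ forces $k=1$. Thus \emph{$f$ is topologically mixing}; in particular periodic points are dense in $S$, and the shadowing property endows $f$ with a local product (bracket) structure on all of $S$.

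The heart of the proof, and the step I expect to be the main obstacle, is to deduce that under $(2)$ or $(3)$ the map $f$ is in fact expansive. Arguing by contradiction, fix $c>0$ small enough to be a cw-expansivity constant and to lie below the constants implicit in $(2)$ and $(3)$; if $f$ is not expansive, the first task is to use the density of periodic points (and, presumably, the structure theory of cw-expansive surface homeomorphisms following Kato \cite{Kato93,Kato93B}, where nontrivial stable or unstable continua appear as leaves of singular foliations) to produce a \emph{periodic} point $p$, of period $m$ say, whose dynamical ball $\Gamma_{c}(p)$ contains a point $q\neq p$. Since $\Gamma_{c}(p)\subset\overline{B}(p,c)$ and the orbit of $q$ stays within $c$ of the finite orbit of $p$, one can then code $\{0,1\}^{\Z}$ into $f$: concatenating, window by window of length $m$, the orbit segment $(p,fp,\dots,f^{m-1}p)$ or the orbit segment $(q,fq,\dots,f^{m-1}q)$ according to the symbol read produces an $\eps$-pseudo-orbit (consecutive blocks are glued by jumps of size at most $2c\le\eps$), which one shadows. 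All the resulting orbits lie in the single dynamical ball $\Gamma_{c'}(p)$ with $c'$ of the order of $c$, hence as small as desired; on the other hand, over the first window where two codes disagree the corresponding orbits are separated by at least $d(p,q)$ minus twice the shadowing error. Consequently $f$ restricted to the closure of this family factors onto the full shift on two symbols, so it carries positive topological entropy and the family is uncountable — the very mechanism producing the horseshoes in the arbitrarily small dynamical balls of the pseudo-Anosov map $g_{A}$ of the sphere recalled in the introduction (cf.\ \cite{ArtigueDend,PaVi}). For $(2)$ the uncountability alone contradicts countable expansiveness, and for $(3)$ the positive entropy contradicts entropy cw-expansiveness. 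The genuinely delicate points — where I expect the real work — are the reduction to a periodic point with nontrivial dynamical ball and the verification that the coded family stays inside one small dynamical ball while the two kinds of block separate uniformly, using only the weak bracket structure that shadowing provides in the absence of expansiveness.

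Once $f$ is known to be expansive, it is an expansive homeomorphism of a compact surface with the shadowing property; by Hiraide's classification of such maps (see \cite{Hiraide,AH}), $S$ is the $2$-torus and $f$ is topologically conjugate to a hyperbolic toral automorphism, which is an Anosov diffeomorphism. This yields $(2)\Rightarrow(1)$ and $(3)\Rightarrow(1)$, completing the equivalences.
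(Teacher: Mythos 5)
Your outer structure (reduce to ``$f$ is expansive'' and then invoke Hiraide) matches the paper's, but the central step --- deducing expansiveness from (2) or (3) --- has a genuine gap, in fact two. First, you need a \emph{periodic} point $p$ with $\Gamma_c(p)\neq\{p\}$, and you offer no argument for producing one; non-expansiveness only gives some point with a nontrivial dynamical ball, and density of periodic points does not transfer the defect to a periodic orbit. Second, and more seriously, even granting such $p$ and $q\in\Gamma_c(p)\setminus\{p\}$, your coding scheme does not close up quantitatively. The blocks are glued by jumps of size up to $2c$, so the shadowing must be performed at a precision $\eps'$ whose associated $\delta$ exceeds $2c$; but two pseudo-orbits with different codes differ by at most $\gamma=\sup_k d(f^k(p),f^k(q))\le c$, so the shadowed orbits differ by at least $\gamma-2\eps'$, which is negative in any consistent choice of constants ($\eps'$ must be large enough that $\delta(\eps')>2c\ge 2\gamma$, while distinguishing codes needs $\eps'<\gamma/2$). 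Thus different codes need not produce different orbits and you get neither uncountability nor entropy. This is precisely why Lemma \ref{lemaWillyBarney} requires the two orbits to return \emph{$\delta$-close} at the gluing times, with $\delta$ chosen \emph{after} the separation threshold $\eps$ --- a hypothesis a mere dynamical ball does not supply.

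The paper circumvents both problems by working with the \emph{asymptotic} dynamical ball $V^s_c(x)\cap V^u_c(x)$: if $y$ lies there and $y\neq x$, then $d(f^k(x),f^k(y))\to 0$ as $k\to\pm\infty$, so the orbits are $\delta$-close at the gluing times for \emph{every} $\delta$ while separating by the fixed amount $d(x,y)$ in between, and Lemma \ref{lemaWillyBarney} applies cleanly. Concretely the chain is: countable expansivity plus shadowing plus $\Omega(f)=S$ gives entropy expansivity (Theorem \ref{thmCountEnt}, a pigeonhole argument on $(n,\eps)$-separated sets); entropy expansivity gives asymptotic expansivity (Proposition \ref{ass}); asymptotic expansivity plus cw-expansivity gives cw$1$-expansivity via Kato's result that local stable continua lie in stable sets \cite{Kato93}; and cw$1$-expansive surface homeomorphisms are expansive by \cite{ArtigueDend}. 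That last external input, which you do not use, is what actually replaces your horseshoe argument; without it (or a corrected quantitative coding at the level of asymptotic dynamical balls) your proof does not go through. Your spectral-decomposition reduction to topological mixing is not needed in the paper's argument.
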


In \cite{APV} there are examples of 2-expansive homeomorphisms on the bitorus that are not expansive. We note that they are not transitive and do not satisfy the shadowing property. There, it is also proved that 2-expansive surface homeomorphisms without wandering points are indeed expansive. Thus, Theorem \ref{3.3} can also be viewed as a generalization of this result to the more general cases of countably and entropy cw-expansivity, when the shadowing property is present. We also remark that the identity in a cantor set is entropy cw-expansive, satisfies the shadowing property and $\Omega(f)=X$ but is not countably expansive. Hence, (2) and (3) are not equivalent for every metric space. The following is unanswered, tough.

\begin{question}
Is every countably expansive homeomorphism also entropy expansive?
\end{question}








\section{Definitions and Proofs}

Let us recall the main definitions we use in this paper. The dynamical ball of $x\in X$ of radius $c>0$ is the set $$\Gamma_{c}(x):=\{y\in X \,\,\, ; \,\,\, d(f^n(x),f^n(y))\leq c \,\,\, \text{for every} \,\,\, n\in\Z\}.$$ 
We consider the \emph{c-stable set} of $x\in X$ as the set 
$$W^s_{c}(x):=\{y\in X; \,\, d(f^k(y),f^k(x))\leq c \,\,\,\, \textrm{for every} \,\,\,\, k\geq 0\}$$
and the \emph{c-unstable set} of $x$ as the set $$W^u_{c}(x):=\{y\in X; \,\, d(f^k(y),f^k(x))\leq c \,\,\,\, \textrm{for every} \,\,\,\, k\leq 0\}.$$
The dynamical ball can be written as $$\Gamma_{c}(x)=W^u_{c}(x)\cap W^s_{c}(x).$$ 
We say that $f$ is \emph{expansive} if there exists $c>0$ such that $$\Gamma_c(x)=\{x\} \,\,\,\,\,\, \text{for every} \,\,\,\,\,\, x\in X.$$ 
We say that $f$ is n-expansive, finite expansive, countably expansive or cw-expansive if there exists $c>0$ such that $\#\Gamma_c(x)\leq n$, $\Gamma_c(x)$ is finite, $\Gamma_c(x)$ is countable, $\Gamma_c(x)$ is totally disconected, respectively, for every $x\in X$. It is clear that expansivity implies n-expansivity, that implies finite expansivity, that implies countable expansivity, which, in turn, implies cw-expansivity.

Now we define entropy expansivity. Given $n\in\N$ and $\delta>0$, we say that $E\subset X$ is $(n,\delta)$-\emph{separated}
if for each $x,y\in E$, $x\neq y$, there is $k\in \{0,\dots,n-1\}$ such that
$\dist(f^k(x),f^k(y))>\delta$.
Consider a subset $F\subset X$ and let $s_n(F,\delta)$ denote the maximal cardinality of an $(n,\delta)$-separated subset $E\subset F$.
Since $X$ is compact, $s_n(F,\delta)$ is finite.
Let\[
 h(F,\delta)=\limsup_{n\to\infty}\frac 1n\log s_n(F,\delta).
\]
We say that $f$ is \emph{entropy expansive} (or $h$-\emph{expansive}) if
there is $c>0$ such that
$h(\Gamma_c(x),\delta)=0$ for all $x\in X$ and all $\delta>0$.
Note that $h(F,\delta)$ increases as $\delta\to 0$ and define $h(F)=\lim_{\delta\to 0}h(F,\delta)$. Entropy expansivity was introduced by Bowen in \cite{Bowen} and further explored by several authors (see \cite{Buzzi} for example).

Now we define the shadowing property.  We say that a sequence $(x_k)_{k\in\Z}\subset X$ is a $\delta$-pseudo orbit if $d(f(x_k),x_{k+1})<\delta$ for every $k\in\Z$. The sequence $(x_k)_{k\in\Z}\subset X$ is $\eps$-\emph{shadowed} if there exists $y\in X$ satisfying $$d(f^k(y),x_k)<\eps, \,\,\,\,\,\, k\in\Z.$$ We say that $f$ has the \emph{shadowing property} if for every $\eps>0$ there exists $\delta>0$ such that every $\delta$-pseudo-orbit is $\eps$-shadowed.

The following is Lemma 3.1 of \cite{ACCV} and is important for the proof of Theorem \ref{thmEstablesEnT}. It was used in \cite{ACCV} as a mechanism to construct arbitrarily small topological semihorseshoes (a phenomenon similar to the one contained in the pseudo-Anosov diffeomorphism of the sphere explained in the introduction) for homeomorphisms with the L-shadowing property (a generalization of the shadowing property introduced in \cite{CC2} and explored in \cite{ACCV}).
As usual, $\Omega(f)$ denotes the set of non-wandeing points. 

\begin{lemma}\cite{ACCV}
\label{lemaWillyBarney}
Let $f\colon X\to X$ be a homeomorphism satisfying the shadowing property.
For all $\eps>0$ there is $\delta>0$ such that if $x\in\Omega(f)$, $y\in X$, $n>0$ satisfy:
\begin{equation}
 \label{ecuLiYorkeTrucho}
 \left\{
 \begin{array}{l}
 \epsilon<\max\{\dist(f^k(x),f^k(y)):0\leq k< n\}=:\gamma,\\
 \max\{\dist(x,y),\dist(f^n(x),f^n(y))\}<\delta,
 \end{array}
 \right.
\end{equation}
then
there are a compact set $K\subset X$ and $N\geq 1$
such that $\sup_{k\in\Z}\diam(f^k(K))\leq 2\gamma$, $f^N(K)=K$ and
$f^N\colon K\to K$ is semiconjugate to a shift.
In particular, $K$ is uncountable and $h(K)>\frac{\log(2)}N$.
\end{lemma}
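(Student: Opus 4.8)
The plan is to produce $K$ as a symbolic horseshoe: I will shadow a family of $\delta$-pseudo-orbits indexed by $\{0,1\}^{\Z}$, each obtained by concatenating the two orbit arcs handed to us by the hypothesis with a fixed ``return arc'' coming from $x\in\Omega(f)$. To begin, fix $\delta>0$ to be a small enough constant compatible with the shadowing property for precision $\eps/2$ (a third of the shadowing constant, say), shrinking it further so that $\delta\le\eps/2<\gamma$; this choice uses only $\eps$, never $n$.

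Assume now that $x,y,n$ satisfy \reff{ecuLiYorkeTrucho}, and pick $k^*\in\{0,\dots,n-1\}$ realising the maximum, i.e. $\dist(f^{k^*}(x),f^{k^*}(y))=\gamma$. The hypothesis immediately gives the two ``branches'': the genuine orbit arcs $A=(x,f(x),\dots,f^{n-1}(x))$ and $B=(y,f(y),\dots,f^{n-1}(y))$, which begin $\delta$-close to one another, after $n$ steps again land $\delta$-close (now at $f^n(x)$ and $f^n(y)$), and are exactly $\gamma$ apart at step $k^*$. The one non-routine ingredient — and the only place where $x\in\Omega(f)$ is used — is a ``return arc'' taking $f^n(x)$ back to $x$: here I would use that $\Omega(f)$ is contained in the chain recurrent set and that chain recurrence classes are $f$-invariant, so that $f^n(x)$ and $f^{-1}(x)$ lie in the same chain class as $x$, whence there is a finite $\delta$-chain $R=(r_0=f^n(x),r_1,\dots,r_p=f^{-1}(x))$. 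Crucially $p$ may be huge and may depend on $n$, but the mesh of $R$ is still only $\delta$ — this is exactly why $\delta$ could be fixed before $n$ entered, and why no continuity estimate for $f^n$ is available or needed.

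Put $N:=n+p+1$. For $a=(a_i)_{i\in\Z}\in\{0,1\}^{\Z}$, let $P_a$ be the bi-infinite sequence whose $i$-th length-$N$ block is $A$ if $a_i=0$ and $B$ if $a_i=1$, immediately followed by $R$. Every consecutive jump in $P_a$ is $<\delta$: inside $A$, $B$ or $R$ this is clear (the first two are actual orbit arcs, the last a $\delta$-chain); the arc-to-$R$ jump equals $\dist(f^n(w),f^n(x))$, which is $0$ or $<\delta$; and the $R$-to-next-block jump equals $\dist(f(f^{-1}(x)),w')=\dist(x,w')$, which is $0$ or $<\delta$ because $\dist(x,y)<\delta$. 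Hence each $P_a$ is a $\delta$-pseudo-orbit, so by the shadowing property it is $\eps/2$-shadowed by some point of $X$.

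Let $K$ be the set of all points $\eps/2$-shadowing some $P_a$. Then $K$ is closed (pass to the limit in the shadowing inequalities), hence compact; $f^{\pm N}$ sends a point shadowing $P_a$ to one shadowing $P_{\sigma^{\pm1}(a)}$, $\sigma$ the shift, so $f^N(K)=K$; at each time $k$ the entries $(P_a)_k$ range, as $a$ varies, over a set of diameter $\le\gamma$ — they are $f^j(x)$ or $f^j(y)$ with $0\le j<n$ inside a block, and a single point inside an $R$-segment — so $\diam f^k(K)\le\gamma+2(\eps/2)<2\gamma$; finally the coding map $\pi\colon K\to\{0,1\}^{\Z}$ sending a point to the sequence it shadows is well defined (if $a_i\ne b_i$, then at the copy of $k^*$ inside block $i$ two shadowing points of $P_a$ and $P_b$ would lie within $\eps$ of points that are $\gamma>\eps$ apart), continuous, surjective, and satisfies $\pi\circ f^N=\sigma\circ\pi$. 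Therefore $f^N|_K$ is semiconjugate onto the full $2$-shift, $K$ is uncountable, and $h(K)=\tfrac{1}{N} h(f^N|_K)\ge\tfrac{\log 2}{N}$; the strict inequality is then obtained by a minor enrichment of the alphabet, e.g. by permitting an optional excursion around a periodic orbit near $x$ (which exists because $\Omega(f)=\overline{\Per(f)}$ under shadowing) to be inserted between consecutive blocks. The crux of the whole argument is this single return arc — producing a $\delta$-chain from $f^n(x)$ to $f^{-1}(x)$ out of $x\in\Omega(f)$ alone, with no $n$-dependent smallness — which is precisely the $f$-invariance of the chain recurrence classes.
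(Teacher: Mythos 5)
Your construction is exactly the one the paper has in mind: the paper only quotes this lemma from \cite{ACCV} and gives a one-paragraph sketch (concatenate the two orbit arcs in all possible orders and shadow), and your proof fills in that sketch correctly. The choice of $\delta$ from shadowing at precision $\eps/2$ independently of $n$, the use of $x\in\Omega(f)\subset CR(f)$ together with the $f$-invariance of chain classes to produce the return $\delta$-chain of uncontrolled length $p$, the verification that every $P_a$ is a $\delta$-pseudo-orbit, the well-definedness and continuity of the coding map (via $\gamma>\eps$ at the time $k^*$), the bound $\diam(f^k(K))\le\gamma+\eps<2\gamma$, and the conclusion that $f^N|_K$ factors onto the full $2$-shift are all sound.

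The one genuine flaw is your treatment of the \emph{strict} inequality $h(K)>\frac{\log 2}{N}$. The full-shift factor only gives $h(K)\ge\frac{\log 2}{N}$, and your proposed enrichment rests on the claim that $\Omega(f)=\overline{\Per(f)}$ for homeomorphisms with the shadowing property. That is false: shadowing yields $CR(f)=\Omega(f)$ and lets you shadow periodic pseudo-orbits, but without expansivity the shadowing point need not be periodic (an odometer on the Cantor set has the shadowing property, is minimal, and has no periodic points at all). So the ``optional excursion around a periodic orbit'' need not exist, and as written the strict inequality is not established. This does not affect any application in the present paper, where only uncountability of $K$ and $h(K)>0$ are used, but as a proof of the statement as quoted it is a gap; to repair it you would need a different enrichment of the symbolic model (or simply prove the non-strict bound, which is all that is ever invoked).
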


The idea of the proof of Lemma \ref{lemaWillyBarney} is the following: if two-points $x,y\in X$ are $\delta$-close, one of them is a non-wandering point, some future iterates $f^k(x),f^k(y)$ are $\eps$-distant and thereafter iterates $f^n(x),f^n(y)$ become $\delta$-close again (see Figure 1 in \cite{ACCV}), then the shadowing property creates uncontable sets with positive entropy in a same dynamical ball of radius $\gamma$. 
This was used by the authors to characterize the non-expansive chain-recurrent classes of a homeomorphism satisfying the L-shadowing property.
In the following lemma, which is the main step in the proof of Theorem \ref{thmEstablesEnT}, we use Lemma \ref{lemaWillyBarney} to obtain a sufficient condition that assures the inclusion $W^s_{\alpha}(x)\subset W^s(x)$.
In the next result, $\omega(x)$ denotes the $\omega$-limit set of $x$.

\begin{lemma}
\label{lemaQuadro}
Let $f\colon X\to X$ be a countably (or entropy) expansive homeomorphism satisfying the shadowing property.
If $x\in\Omega(f)$, $z\in\omega(x)$ and $\Gamma_\alpha(z)=\{z\}$ for some $\alpha>0$,
then $W^s_\alpha(x)\subset W^s(x)$.
\end{lemma}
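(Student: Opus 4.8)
The plan is to argue by contradiction: suppose there is a point $y\in W^s_\alpha(x)\setminus W^s(x)$. Then $d(f^k(x),f^k(y))\leq\alpha$ for all $k\geq 0$, but $d(f^k(x),f^k(y))$ does not tend to $0$, so there is $\beta\in(0,\alpha]$ and a sequence $n_j\to\infty$ with $d(f^{n_j}(x),f^{n_j}(y))>\beta$. The strategy is to transport this ``recurrent separation'' along the forward orbit to the point $z\in\omega(x)$, where $\Gamma_\alpha(z)=\{z\}$, and derive a contradiction with the fact (Lemma \ref{lemaWillyBarney}) that the hypothesized separation together with the shadowing property forces an uncountable dynamical ball of positive entropy somewhere, violating countable (or entropy) expansivity.

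More precisely, I would fix $\eps$ a bit smaller than $\beta$ and let $\delta=\delta(\eps)$ be given by Lemma \ref{lemaWillyBarney} (shrinking $\delta$ so that $\delta<\eps$ and so that the shadowing constant is available). Since $z\in\omega(x)$, pick a large time $m$ with $f^m(x)$ very close to $z$ --- say $d(f^m(x),z)<\delta/2$; and since $z\in\omega(x)$ again, pick a much larger time $m'=m+n$ ($n>0$) with $d(f^{m'}(x),z)<\delta/2$, hence $d(f^m(x),f^{m'}(x))<\delta$. Now compare the two orbit segments starting at $f^m(x)$ and at $f^m(y)$: we have $d(f^m(x),f^m(y))\leq\alpha$, but I want the starting pair to be $\delta$-close, and I also need a genuine $\eps$-separation occurring between times $m$ and $m'$. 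This is where the chosen sequence $n_j$ enters: by choosing $m$ and $m'$ appropriately among the recurrence times and separation times (interleaving them: first a recurrence time, then a later separation time $n_j$, then a still later recurrence time), I can arrange that along the segment the points spread apart to distance $>\eps$ while the endpoints $f^m(x)$, $f^{m'}(x)$ stay $\delta$-close to $z$ and hence to each other. The remaining gap is that Lemma \ref{lemaWillyBarney} requires $d(f^m(x),f^m(y))<\delta$ at the \emph{left} endpoint as well, not just $\leq\alpha$; I would handle this by first using forward recurrence of the separation to relocate the comparison to a time where $x$ and $y$ are $\delta$-close (since $d(f^k(x),f^k(y))$ oscillates and $\liminf$ could still be positive, one instead applies the whole argument with $x$ replaced by a point of $\omega(x)$: shadow a pseudo-orbit that follows $x$ for a long time, jumps near $z$, follows $z$, and returns, so that the genuine point produced starts and ends $\delta$-close to $z$ while inheriting an $\eps$-excursion). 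Then Lemma \ref{lemaWillyBarney} applies with $x$ replaced by (a point near) $z$, producing a compact invariant $K$ with $\sup_k\diam(f^k(K))\leq 2\gamma\leq 2\alpha$ on which some power is semiconjugate to a shift; in particular $K$ is uncountable with $h(K)>0$.

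Finally I would derive the contradiction. Because $\diam(f^k(K))\leq 2\gamma$ with $\gamma$ close to $\beta<\alpha$ --- here one must be a little careful, since $2\gamma$ could exceed $\alpha$ --- I instead run the construction with $\eps,\beta,\gamma$ chosen from the outset so small that $2\gamma<\alpha$ (this is legitimate: any positive oscillation in $d(f^k(x),f^k(y))$ below level $\alpha$ gives us freedom to pick $\beta$ as small as we like by passing to the $\liminf$ of the separation being positive, or if that $\liminf$ is $0$ we still get infinitely many separation-then-recurrence windows at an arbitrarily small but positive level). With $2\gamma<\alpha$, the set $K$ lies in a single dynamical ball $\Gamma_\alpha(w)$ for any $w\in K$ (since $f^k(K)$ has diameter $\leq2\gamma<\alpha$ and contains $f^k(w)$), so $\Gamma_\alpha(w)\supset K$ is uncountable, contradicting countable expansivity; and $h(\Gamma_\alpha(w))\geq h(K)>0$ contradicts entropy expansivity. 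Hence no such $y$ exists and $W^s_\alpha(x)\subset W^s(x)$.

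The main obstacle I expect is the bookkeeping in the middle step: correctly interleaving the recurrence times of $x$ toward $z$ with the separation times of the pair $(x,y)$, and in particular arranging that the genuine point furnished by shadowing (rather than $x$ itself) begins \emph{and} ends within $\delta$ of $z$ while still exhibiting an $\eps$-sized excursion --- because $W^s_\alpha$ only controls the orbit of $y$ relative to $x$, not relative to $z$, and only by a bound $\alpha$ that a priori matches the scale at which $\Gamma_\alpha(z)=\{z\}$ is assumed. Threading $\delta\ll\eps\ll\beta\le\gamma$ with $2\gamma<\alpha$, and justifying that a positive-level separation window can always be found, is the delicate part; once the hypotheses of Lemma \ref{lemaWillyBarney} are met, the conclusion is immediate.
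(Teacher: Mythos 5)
Your opening move --- interleaving times where the pair $(x,y)$ comes close with times where it is $\eps$-separated so as to meet the hypotheses of Lemma \ref{lemaWillyBarney} --- is exactly the paper's first step, and it correctly disposes of the case $\liminf_{k\to\infty} d(f^k(x),f^k(y))=0$: since $y\in W^s_\alpha(x)$ one automatically has $\gamma\le\alpha$ there, so most of your bookkeeping about shrinking $\beta$ and arranging $2\gamma<\alpha$ is not the real difficulty. The genuine gap is the complementary case, which your plan does not handle: the conclusion of that first step is only that there exists $r>0$ with $d(f^k(x),f^k(y))>r$ for all $k\ge 0$. In that situation the two genuine orbits \emph{never} return $\delta$-close to each other, so the hypotheses of Lemma \ref{lemaWillyBarney} can never be met for this pair, and indeed no semihorseshoe need exist --- $y$ could simply track $x$ forever at a distance bounded between $r$ and $\alpha$. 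Your proposed workaround (shadowing a pseudo-orbit that follows $x$, jumps near $z$, and returns) does not produce the two genuine orbits, $\delta$-close at both endpoints of an $\eps$-excursion, that the lemma requires; shadowing one pseudo-orbit yields one shadowing point, not a separated pair, and you give no construction that would manufacture such a pair.

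The missing idea is that in this uniformly separated case the contradiction must come from the hypothesis $\Gamma_\alpha(z)=\{z\}$, which your argument never actually invokes, rather than from Lemma \ref{lemaWillyBarney} and expansivity. Concretely: take $i_k\to\infty$ with $f^{i_k}(x)\to z$ and, by compactness, assume $f^{i_k}(y)\to z_y$. The uniform separation gives $d(z_y,z)\ge r$, so $z_y\ne z$, while $y\in W^s_\alpha(x)$ gives
\[
d(f^m(z),f^m(z_y))=\lim_{k\to\infty} d\bigl(f^{m+i_k}(x),f^{m+i_k}(y)\bigr)\le\alpha
\quad\text{for every } m\in\Z ,
\]
so $z_y\in\Gamma_\alpha(z)\setminus\{z\}$, contradicting $\Gamma_\alpha(z)=\{z\}$. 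Without this limit argument your proof cannot close, because the whole point of the lemma is to convert a local hypothesis at the single point $z\in\omega(x)$ into a conclusion about $x$, and that transfer is exactly what the passage to the limit along $i_k$ accomplishes.
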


\begin{proof}
  If the thesis is not true,
then there exists $y\in W^s_\alpha(x)$ such that $y\notin W^s(x)$.
 Lemma \ref{lemaWillyBarney} assures the existence of $r>0$ such that
 \begin{equation}
 \label{ecuAfastados}
d(f^k(x),f^k(y))>r\text{ for every }k\in\N.
 \end{equation} Indeed, if this is not the case, then there exists an increasing sequence of positive integers $(n_k)_{k\in\N}$ such that $$d(f^{n_k}(x),f^{n_k}(y))<\frac{1}{k} \,\,\,\,\,\, \text{for every} \,\,\,\,\,\, k\in\N.$$ Since $y\notin W^s(x)$, there is $\eps>0$ and another increasing sequence of positive integers $(m_k)_{k\in\N}$ such that $$d(f^{m_k}(x),f^{m_k}(y))>\eps \,\,\,\,\,\, \text{for every} \,\,\,\,\,\, k\in\N.$$ Let $\delta>0$ be given by Lemma \ref{lemaQuadro} for this $\eps$ and choose $k\in\N$ such that $\frac{1}{k}<\delta$. Consider $i>j>k$ such that $n_k<m_j<n_i$.
Thus, $$d(f^{n_k}(x),f^{n_k}(y))<\frac{1}{k}<\delta,$$ $$d(f^{m_j}(x),f^{m_j}(y))>\eps \,\,\,\,\,\, \text{and} \,\,\,\,\,\, d(f^{n_i}(x),f^{n_i}(y))<\frac{1}{i}<\delta.$$ Since $x\in\Omega(f)$, Lemma 2.1 applies and contradicts countably (and entropy) expansivity. This proves the existence of $r>0$ satisfying (\ref{ecuAfastados}).
 Since $z\in\omega(x)$ there exists $(i_k)_{k\in\N}\subset\N$ satisfying $i_k\to\infty$ when $k\to\infty$, such that $$z=\underset{k\to\infty}{\lim}f^{i_k}(x).$$
 Since $X$ is compact, we can assume that $(f^{i_k}(y))_{k\in\N}$ converges to a point $z_y$.
 Consequently, $y\in W^s_{\alpha}(x)$ implies $$d(z_y,z)\leq\alpha.$$
 From \eqref{ecuAfastados} we obtain that $z_y\neq z$. Note that $z_y\in\Gamma_{\alpha}(z)$ since
\begin{eqnarray*}
d(f^m(z),f^m(z_y))=\underset{k\to\infty}{\lim}d(f^m(f^{i_k}(x)),f^m(f^{i_k}(y)))\leq \alpha
\end{eqnarray*}
for every $m\in\Z$.
This contradicts that $\Gamma_\alpha(z)=\{z\}$.
\end{proof}

The next proposition is also important in the proof of Theorem \ref{thmEstablesEnT}. The previous lemma assures that, in our scenario, the existence of a trivial dynamical ball in a point in the omega limit set of $x$ is sufficient to assure the inclusion of the local stable set of $x$ in the stable set of $x$. Then it is natural to consider the set of points with trivial dynamical ball of a suitable radius. Such points will be called \emph{expansive points} and we denote the set of all expansive points by
\[
   X_e=
  \{x\in X;\,\,\,\Gamma_\epsilon(x)=\{x\}\,\,\, \text{for some} \,\,\,\eps>0\}.
 \] If $f$ is finite expansive, then it is clear that every point is expansive and $X_e=X$. When $f$ is countable expansive the following is proved.

\begin{prop}
\label{propGammasTriviales}
 If $f$ is countably expansive, then $X_e$ is dense in $X$.
\end{prop}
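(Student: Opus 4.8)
The plan is to argue by contradiction and, assuming $X_e$ is not dense, to produce a single point $q$ whose dynamical ball $\Gamma_c(q)$ is uncountable, contradicting countable expansivity; here $c>0$ is a constant such that $\Gamma_c(x)$ is countable for every $x$. So suppose $X_e$ is not dense. Then $U:=X\setminus\overline{X_e}$ is a nonempty open set disjoint from $X_e$, and one can fix $q\in U$ together with a radius $0<\rho\le c$ for which the closed ball $\bar B(q,\rho)$ is contained in $U$; in particular, every point within distance $\rho$ of $q$ fails to be an expansive point, i.e. satisfies $\Gamma_\delta(\cdot)\ne\{\cdot\}$ for all $\delta>0$.

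The candidate uncountable set will be the closure of the \emph{strict interior} of the dynamical ball of $q$ of radius $\rho/2$, namely
$K_0:=\{y\in\Gamma_{\rho/2}(q):\ \sup_{n\in\Z}d(f^n y,f^n q)<\rho/2\}$,
which contains $q$. The heart of the argument is to show that $K_0$ has no isolated points. Given $y\in K_0$, the inequality $d(y,q)\le\rho/2<\rho$ places $y$ in $U$, hence $y\notin X_e$, hence $\Gamma_\delta(y)\ne\{y\}$ for every $\delta>0$. Choosing $\delta>0$ below both a prescribed tolerance and the positive quantity $\rho/2-\sup_n d(f^n y,f^n q)$ (positivity here is exactly where being in the \emph{strict} interior is used), and picking $y'\in\Gamma_\delta(y)\setminus\{y\}$, the triangle inequality gives $d(f^ny',f^nq)\le\delta+\sup_m d(f^m y,f^m q)<\rho/2$ for all $n$, so $y'\in K_0$, while $d(y',y)\le\delta$ can be made arbitrarily small. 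Thus every $y\in K_0$ is accumulated by points of $K_0$.

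Once $K_0$ has no isolated points, its closure $\overline{K_0}$ is a nonempty compact metric space without isolated points — an isolated point of $\overline{K_0}$ would either be an isolated point of $K_0$ or fail to lie in $\overline{K_0}$ — hence $\overline{K_0}$ is perfect and therefore uncountable. Since $\Gamma_{\rho/2}(q)$ is closed and $\rho/2\le c$, we obtain $\overline{K_0}\subseteq\Gamma_{\rho/2}(q)\subseteq\Gamma_c(q)$, contradicting the countability of $\Gamma_c(q)$; hence $X_e$ is dense. The step I expect to be the main obstacle is identifying the right set to take the closure of: a naive attempt to build a Cantor scheme inside $\Gamma_c(q)$ by hand fails, because the orbit‑separation that non-expansivity produces at a point is of the same order as the perturbation needed to find it, so the branches collapse; the correct move is to pass to the strict interior of a dynamical ball, where non-expansivity at each point yields nearby companions that provably remain in the same region, and to let the closure operation manufacture the perfect set for free.
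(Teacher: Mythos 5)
Your proof is correct and follows essentially the same strategy as the paper's: locate a ball of non-expansive points, use the fact that each such point accumulates on its own dynamical balls to produce a dense-in-itself subset of a dynamical ball of $q$, and take its closure to get a perfect (hence uncountable) compact set, contradicting countable expansivity. The only difference is bookkeeping: the paper keeps the accumulating points inside $\Gamma_\delta(q)$ via a nested union of dynamical balls with geometrically decreasing radii, while you achieve the same containment with the ``strict interior'' slack $\rho/2-\sup_n d(f^ny,f^nq)>0$ --- a clean equivalent.
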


\begin{proof}
Let $c>0$ be such that $\Gamma_c(x)$ is countable for all $x\in X$.
Arguing by contradiction, let $x\in X$ and $0<\delta<c$ be such that $\Gamma_\epsilon(y)\neq\{y\}$ for all
$y\in B_\delta(x)$ and all $\epsilon>0$. This implies, in particular, that $y$ is an accumulation point of $\Gamma_{\eps}(y)$.
Define $A_1=\Gamma_{\delta/2}(x)$ and
$A_n=\cup_{y\in A_{n-1}}\Gamma_{\delta/2^n}(y)$ for all $n>1$.
Notice that
\[
 A_1\subset A_2\subset\dots\subset A_n\subset \Gamma_{\delta}(x)
\]
and that each $y\in A_n$ is an accumulation point in $A_{n+1}$.
Consequently, the set
$\clos(\cup_{n\geq 1} A_n)\subset \Gamma_\delta(x)$
is perfect.
As it is compact, it is uncountable, which contradicts the countable expansivity of $f$ and the proof ends.
\end{proof}

\begin{question}
Assuming that $f$ is countably expansive, is $X_e$ residual in $X$?
\end{question}

Proposition \ref{propGammasTriviales} is the unique step in the proof of Theorem \ref{thmEstablesEnT} 
that we do not know how to prove only assuming entropy cw-expansivity. 
We say that a homeomorphism is \emph{entropy cw-expansive} if it is h-expansive and cw-expansive.

\begin{question}
Is $X_e$ non-empty for transitive entropy cw-expansive homeomorphisms satisfying the shadowing property? 
\end{question}

If this is true, then the hypothesis of countable expansivity in Theorem \ref{thmEstablesEnT} could be replaced by entropy cw-expansivity. 

\begin{remark}
Note that the identity in a cantor set is entropy cw-expansive, has the shadowing property and $\Omega(f)=X$, but $X_e=\emptyset$. The identity is not transitive, tough. Thus, transitivity should play an important role in an affirmative answer.
\end{remark}

Define the set $V^s_{c}(x)$ as the intersection $W^s(x)\cap W^s_{c}(x)$ of the stable and the local stable sets of $x$ and $V^u_{c}(x)=W^u(x)\cap W^u_{c}(x)$.

\begin{proof}[Proof of Theorem \ref{thmEstablesEnT}]
Let $f\colon X\to X$ be a transitive, countably expansive homeomorphism satisfying the shadowing property.
Proposition \ref{propGammasTriviales} assures the existence of expansive points of $f$, that is, there are
$z\in X$ and $\eps>0$ such that $\Gamma_\eps(z)=\{z\}$.
Given that $f$ is transitive, $z$ belongs to the $\omega$-limit set of every $x\in T^+$.
Then Lemma \ref{lemaQuadro} applies and we conclude that $$W^s_\eps(x)\subset W^s(x) \,\,\,\,\,\, \text{for all} \,\,\,\,\,\, x\in T^+.$$ A similar argument proves that $$W^u_\epsilon(x)\subset W^u(x) \,\,\,\,\,\, \text{for all} \,\,\,\,\,\, x\in T^-.$$
Therefore, we have $$\Gamma_\epsilon(x)=W^s_\epsilon(x)\cap W^u_\epsilon(x)\subset V^s_{\eps}(x)\cap V^u_{\eps}(x)$$ for every $x\in T=T^+\cap T^-$.
Lemma \ref{lemaWillyBarney} again assures that $$\Gamma_\epsilon(x)=\{x\} \,\,\,\,\,\, \text{for all} \,\,\,\,\,\, x\in T.$$ Indeed, for $z\in _\epsilon(x)\cap V^u_\epsilon(x)\setminus\{x\}$, let $\eps_1=d(x,z)$, $\delta_1>0$ given by Lemma \ref{lemaWillyBarney} for $\eps_1$ and choose $k_0\in\N$ such that $$d(f^k(z),f^k(x))<\delta_1\,\,\,\,\,\, \text{whenever} \,\,\,\,\,\, |k|\geq k_0.$$ Then we obtain the exact situation where Lemma \ref{lemaWillyBarney} applies to contradict countable expansivity.
\end{proof}

Now we turn our attention to the proof of Theorem \ref{3.3} where we prove that for a transitive homeomorphism of a compact surface satisfying the shadowing property, countable expansivity, entropy expansivity and being conjugate to an Anosov diffeomprhisms are equivalent. In the proof of this theorem, we explore the relation between countable expansivity and entropy expansivity. It is not clear whether countably expansive homeomorphisms are also entropy cw-expansive. We prove that this is true under the hypothesis of shadowing and $\Omega(f)=X$.

\begin{theorem}
\label{thmCountEnt}
If $f$ is countable expansive, $\Omega(f)=X$ and $f$ has the shadowing property, then $f$ is entropy expansive.
\end{theorem}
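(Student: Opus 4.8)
The plan is to argue by contradiction: assume $f$ is countably expansive with $\Omega(f)=X$ and the shadowing property, but is not entropy expansive, and derive a contradiction with countable expansivity via Lemma \ref{lemaWillyBarney}. Let $c>0$ be an expansivity constant for countable expansivity, i.e.\ $\Gamma_c(x)$ is countable for every $x$. If $f$ is not entropy expansive, then for every $\eps>0$ (in particular for $\eps$ as small as we like relative to $c$) there is a point $x\in X$ and a $\delta_0>0$ with $h(\Gamma_\eps(x),\delta_0)>0$; in particular $\Gamma_\eps(x)$ contains, for infinitely many $n$, an $(n,\delta_0)$-separated subset of cardinality growing exponentially in $n$.

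The key step is to convert such exponential separation inside a dynamical ball into the hypothesis \eqref{ecuLiYorkeTrucho} of Lemma \ref{lemaWillyBarney}. Take $\eps$ small (to be fixed below) and let $\delta>0$ be the constant furnished by Lemma \ref{lemaWillyBarney} for this $\eps$. Fix $x$ and $\delta_0$ as above. Choose $n$ large enough that some $(n,\delta_0)$-separated set $E\subset\Gamma_\eps(x)$ has cardinality exceeding the size of any $\delta$-net of $X$; then two distinct points $y,y'\in E$ satisfy $\dist(y,y')<\delta$ and $\dist(f^n(y),f^n(y'))<\delta$ (using a $\delta$-net of $X$ at times $0$ and $n$, or iterating the pigeonhole argument as in the standard proof that $h$-expansiveness bounds entropy). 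Since $y,y'$ are $(n,\delta_0)$-separated there is $k\in\{0,\dots,n-1\}$ with $\dist(f^k(y),f^k(y'))>\delta_0$; choosing $\eps<\delta_0$ from the start, we get $\eps<\max\{\dist(f^k(y),f^k(y')):0\le k<n\}$. Finally, because $y\in\Gamma_\eps(x)\subset\Omega(f)$ — here is where $\Omega(f)=X$ is used — the pair $(y,y')$ (with $y$ playing the role of the non-wandering point) satisfies \eqref{ecuLiYorkeTrucho}, so Lemma \ref{lemaWillyBarney} produces an uncountable compact set $K$ with $\sup_{k}\diam(f^k(K))\le 2\gamma$ where $\gamma<2\eps$, say. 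Choosing $\eps$ small enough that $2\gamma<c$, we get $K\subset\Gamma_c(w)$ for any $w\in K$, contradicting that $\Gamma_c(w)$ is countable.

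I expect the main obstacle to be the bookkeeping in the pigeonhole step: getting both endpoints $\dist(y,y')<\delta$ \emph{and} $\dist(f^n(y),f^n(y'))<\delta$ simultaneously for a pair that is still $(n,\delta_0)$-separated. The clean way is the classical argument underlying Bowen's inequality $h(f)\le h_{\mathrm{top}}(f,\text{entropy-expansive scale})$: partition $X$ by a finite cover of mesh $<\delta$, and observe that if $\Gamma_\eps(x)$ had positive entropy at scale $\delta_0$ then arbitrarily long orbit segments of points in $\Gamma_\eps(x)$ would have to repeat a "$(\delta,\delta)$-return" pattern, i.e.\ some orbit segment returns $\delta$-close to its start; one then extracts the two required points. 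Alternatively, one can invoke Lemma \ref{lemaWillyBarney} not at times $0$ and $n$ but iterate it: whenever the exponential-separation hypothesis holds, Lemma \ref{lemaWillyBarney} is applicable \emph{at some time scale}, and this is exactly what fails under countable expansivity. Modulo this combinatorial lemma — which is standard and can be cited or proved in a few lines — the argument is a direct application of Lemma \ref{lemaWillyBarney}, exactly parallel to the proof of Lemma \ref{lemaQuadro}.
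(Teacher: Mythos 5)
Your proposal is correct and follows essentially the same route as the paper: positive entropy of a small dynamical ball forces, by unboundedness of $s_n$ and a (double) pigeonhole at times $0$ and $n$, a pair of points satisfying the hypotheses of Lemma \ref{lemaWillyBarney}, whose uncountable invariant set $K$ of small diameter contradicts countable expansivity. The ``bookkeeping'' step you flag is handled in the paper exactly by the iterated pigeonhole you describe (first extract a subset of diameter $<\delta$ at time $0$, then pigeonhole its image at time $n$), so there is no gap.
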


\begin{proof}
Suppose that $c>0$ is such that $\Gamma_c(x)$ is countable for all $x\in X$.
We will prove that $c/2$ is an entropy expansivity constant of $f$.
Arguing by contradiction, suppose that $h(\Gamma_{c/2}(x),\epsilon)>0$ for some $x\in X$ and some $\epsilon>0$.
For this value of $\epsilon$ consider $\delta>0$
given by Lemma \ref{lemaWillyBarney}. As $X$ is compact, there is $l_1\in\N$ such that every subset $E\subset X$ with $l_1$ points contains
at least two different points at distance smaller than $\delta$.
Also, there is a larger $l_2\in\N$ such that every subset $E\subset X$ with $l_2$ points contains
a subset $E'$ with $l_1$ points and with $\diam(E')<\delta$.

Since $h(\Gamma_{c/2}(x),\epsilon)>0$ we have that $\{s_n(\Gamma_{c/2}(x),\epsilon):n\in\N\}$ is unbounded.
Therefore, for some $n\in\N$,
there is an $(n,\epsilon)$-separated subset $E\subset \Gamma_{c/2}(x)$ with $l_2$ points.
Thus, there is a subset $E'\subset E$ with $l_1$ points and with $\diam(E')<\delta$.
The set $f^n(E')$ contains $l_1$ points, and two different points of $f^n(E')$ are at a distance smaller than $\delta$.
That is, there are $x,y\in\Gamma_{c/2}(x)$ such that $\dist(x,y)<\delta$, $\dist(f^n(x),f^n(y))<\delta$ and
$$\epsilon<\sup\{\dist(f^k(x),f^k(y)):0\leq k\leq n\}=\gamma\leq c/2.$$
Then, by Lemma \ref{lemaWillyBarney}, there is an uncountable set $K$ with $\diam(f^i(K))\leq c$ for all $i\in\Z$.
But this contradicts that $c$ is a countable expansivity constant, and the proof ends.
\end{proof}



The set $V^s_c(x)\cap V^u_c(x)$ will be called the \emph{asymptotic dynamical ball of $x$} of radius $c$. Expansivity requires that all the dynamical balls of radius $c$ are trivial, but we consider a more general notion of expansiveness where only the asymptotic dynamical balls of radius $c$ are trivial.

\begin{definition}
A homeomorphism $f\colon X\to X$ is called \emph{asymptotically expansive} if there exists $c>0$ such that $V^s_c(x)\cap V^u_c(x)=\{x\}$ for every $x\in X$.
\end{definition}

The examples of n-expansive homeomorphisms with the shadowing property in \cite{CC} are not expansive but are asymptotically expansive. Indeed, there are periodic points in arbitrarily small dynamical balls, but since distinct periodic points belong to distinct stable and unstable sets, they do not belong to the same asymptotic dynamical ball.

\begin{remark}
We remark that for homeomorphisms satisfying the L-shadowing property, expansivity and asymptotic expansivity are equivalent as the argument of Lemma 3.3 in \cite{ACCV} shows.
\end{remark}

The expansiveness on transitive points in Theorem \ref{thmEstablesEnT} might not be extended to the whole space, but we can prove asymptotic expansiveness even in the more general scenario of entropy expansivity.

\begin{proposition}\label{ass}
If a homeomorphism $f\colon X\to X$ of a compact metric space is entropy expansive, satisfies the shadowing property and $\Omega(f)=X$, then $f$ is asymptotically expansive.
\end{proposition}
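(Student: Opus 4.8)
The plan is to argue by contradiction along the lines of the proof of Theorem~\ref{thmEstablesEnT}, but replacing the use of expansive points and $\omega$-limit sets by a direct application of Lemma~\ref{lemaWillyBarney} to the two "asymptotic" directions. Suppose $c>0$ is an entropy expansivity constant and suppose, for contradiction, that there exist $x\in X$ and $z\in V^s_c(x)\cap V^u_c(x)$ with $z\neq x$. Since $z\in W^s(x)$ we have $d(f^k(x),f^k(z))\to 0$ as $k\to+\infty$, and since $z\in W^u(x)$ we have $d(f^k(x),f^k(z))\to 0$ as $k\to-\infty$; also $d(f^k(x),f^k(z))\le c$ for all $k\in\Z$. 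So, setting $\eps_1=d(x,z)>0$ and taking $\delta_1>0$ from Lemma~\ref{lemaWillyBarney} for $\eps_1$ (with the constant $c$ playing the role of the ambient bound), there is $k_0\in\N$ with $d(f^k(x),f^k(z))<\delta_1$ whenever $|k|\ge k_0$. After shifting time by $f^{-k_0}$ we obtain a pair of points — still in each other's asymptotic dynamical ball, of radius $c$ — that are $\delta_1$-close, become at least $\eps_1$-apart at time $0$ (namely at $f^{k_0}$ applied to the shifted pair), and are again $\delta_1$-close at a large positive time.

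The one hypothesis of Lemma~\ref{lemaWillyBarney} still to be checked is that one of the two points lies in $\Omega(f)$; this is immediate here because $\Omega(f)=X$. Hence Lemma~\ref{lemaWillyBarney} produces a compact set $K$ with $\sup_{i\in\Z}\diam(f^i(K))\le 2\gamma\le 2c$, with $f^N(K)=K$ and $f^N|_K$ semiconjugate to a shift, so that $h(K)>\frac{\log 2}{N}>0$. This $K$ sits inside a dynamical ball of radius $2c$, which at first sight gives a contradiction with entropy expansivity only if $2c$ is itself an entropy expansivity constant. To get the bookkeeping clean, I would instead run the whole argument from the start with $c/2$ in place of $c$: fix an entropy expansivity constant $c$, assume there are $x\neq z$ with $z\in V^s_{c/2}(x)\cap V^u_{c/2}(x)$, apply Lemma~\ref{lemaWillyBarney} with $\eps_1=d(x,z)$, and note that the resulting $\gamma$ satisfies $\gamma\le c/2$ (all the relevant distances are bounded by $c/2$), so $\diam(f^i(K))\le 2\gamma\le c$ for all $i$; then $K$ lies in a single dynamical ball of radius $c$ and $h(K)>0$ contradicts $h(\Gamma_c(y),\delta)=0$.

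The main obstacle, and the only genuinely delicate point, is making sure the distances fed into Lemma~\ref{lemaWillyBarney} respect the radius so that the output set $K$ lands inside a dynamical ball of a radius that is still controlled by the entropy expansivity constant — exactly the $c$-versus-$c/2$ adjustment above, which is the same device used in the proof of Theorem~\ref{thmCountEnt}. Everything else is routine: the asymptotic behaviour of $d(f^k(x),f^k(z))$ in both time directions is exactly what places us in the hypothesis~\eqref{ecuLiYorkeTrucho} of Lemma~\ref{lemaWillyBarney} after a time shift, and $\Omega(f)=X$ disposes of the non-wandering requirement for free. I would close by remarking that this is precisely the surface-free, entropy-expansive counterpart of the transitive statement in Theorem~\ref{thmEstablesEnT}(3): there expansiveness was obtained on $T$ by the same mechanism, here asymptotic expansiveness is obtained everywhere because the asymptotic conditions $z\in W^s(x)\cap W^u(x)$ substitute for transitivity in supplying the returns needed to invoke Lemma~\ref{lemaWillyBarney}.
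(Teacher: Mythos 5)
Your argument is correct and is essentially the paper's own proof: the paper simply says ``as in the proof of Theorem~\ref{thmEstablesEnT}, Lemma~\ref{lemaWillyBarney} contradicts entropy expansivity,'' which unpacks to exactly your time-shift application of Lemma~\ref{lemaWillyBarney} using the two-sided asymptotic convergence and $\Omega(f)=X$. Your explicit $c$-versus-$c/2$ bookkeeping (so that $K$ lands in a dynamical ball of radius at most the entropy expansivity constant) is a detail the paper leaves implicit, but it is the right way to make the contradiction precise.
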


\begin{proof}
If $f$ is not asymptotically expansive, then for every $\eps>0$ there exists $x\in X$ and $y\in V^s_{\eps}(x)\cap V^u_{\eps}(x)\setminus\{x\}$. As in the proof of Theorem \ref{thmEstablesEnT}, Lemma \ref{lemaWillyBarney} contradicts the entropy expansivity.
\end{proof}




Before proving Theorem \ref{3.3} we recall the definition of cwN-expansivity.
We say that $f$ is \emph{cw}$N$-\emph{expansive} if there is $\epsilon>0$ such that
if $C^s$ and $C^u$ are $\epsilon$-stable and $\epsilon$-unstable continua, respectively, then $C^s\cap C^u$ contains at most $N$ points.
Expansive homeomorphisms are always cw1-expansive, while the pseudo-Anosov diffeomorphism described in the introduction is cw2-expansive.
These properties were introduced in \cite{ArtigueDend} where, in particular, the relation of cw1-expansivity and expansivity was discussed.
It is proved in \cite{ArtigueDend}*{Theorem 6.8.5} that cw1-expansive surface homeomorphisms are indeed expansive.

\vspace{+0.4cm}

\begin{proof}[Proof of Theorem \ref{3.3}]
 Every homeomorphism conjugate to an Anosov diffeomorphism is expansive, and in particular countably expansive.
 By Theorem \ref{thmCountEnt}, a countably expansive homeomorphism satisfying the shadowing property and $\Omega(f)=S$ is entropy expansive (and every countably expansive homeomorphism is cw-expansive).
To close the cycle, we have by Proposition \ref{ass} that entropy expansive homeomorphisms satisfying the shadowing property and $\Omega(f)=S$ are asymptotically expansive.
Note that asymptotically expansive homeomorphisms that are cw-expansive are also cw1-expansive, since the existence of $\eps$-stable and unstable continua such that $C^s\cap C^u$ contains at least two points $x,y$ and \cite{Kato93}*{Proposition 2.1} that assures local stable (unstable) continua are contained in the same stable (unstable) set, imply that $y\in V^s_{\eps}(x)\cap V^u_{\eps}(x)$ contradicting asymptotic expansivity. Then we conclude that $f$ is expansive by \cite{ArtigueDend}. It is known that an expansive homeomorphism with the shadowing property on compact surfaces are topologically conjugate to an Anosov diffeomorphism (see \cite{Hiraide2}). This finishes the proof.
\end{proof}

\begin{example}
\label{Example1}
We define a countably expansive homeomorphism in a compact metric space satisfying the shadowing property that is not finite expansive and contains infinitely many distinct chain recurrent classes. Consider an expansive homeomorphism $g$ defined in a compact metric space $(M,d_0)$ and satisfying the shadowing property. Suppose further that $g$ admits a fixed point $p_0$. Define $X=M\cup E$, where $E$ is any infinite enumerable set. In this case, let $(p_k)_{k\in\N}$ be an enumeration of $E$. Define a function $d\colon X\times X\to\R$ by 
$$d(x,y)=\begin{cases}0, & x=y \\
d_0(x,y), & x,y\in M \\
\frac{1}{k}+d_0(x,p_0), & x\in M, y=p_k \\
\frac{1}{k}+d_0(y,p_0), & x=p_k, y\in M \\
\frac{1}{m}+\frac{1}{k}, & x=p_m, y=p_k.
\end{cases}$$

One can prove that $(X,d)$ is a compact metric space. We invite the reader to fill in the details, adapting the argument of \cite{CC}, Section 3. Now we define a function $f\colon X\to X$ by
$$f(x)=\begin{cases} g(x), & x\in M \\
x, & x\in E.
\end{cases}$$ Then $f$ is a countable expansive homeomorphism. Indeed, the dynamical ball for $g$ of any point of $M$ is trivial, since $g$ is expansive, and $X$ is the union of $M$ with a countable set $E$, which could just increase the dynamical ball of any point of $M$ to a countable subset of $X$. Moreover, the points of $E$ are fixed points of $f$ that are close to the fixed point $p_0$. Then no point of $M\setminus\{p_0\}$ could be in the dynamical ball of $p_k$, since it would also be in the dynamical ball of $p_0$, that is trivial in $M$. This proves that the dynamical ball of any point in $E$ is contained in $E\cup\{p_0\}$, that is countable.

We note that $f$ is not finite expansive, since for each $\eps>0$, the set $\Gamma_{\eps}(p_0)$ contains an infinite number of the fixed points $p_k\in E$. Moreover, $f$ has the shadowing property. Indeed, pseudo-orbits contained in $M$ are shadowed because $g$ has the shadowing property. If a pseudo-orbit contains points of $E$ then we consider a pseudo orbit in $M$ switching any point in $E$ by $p_0$. Shadowing orbits of this pseudo orbit also shadow the original pseudo orbit and the shadowing property is proved. Finally, it is easy to see that each $p_k\in E$ belongs to a different chain recurrent class, so that $f$ admits an infinite number of chain recurrent classes.
\end{example}

\section*{Acknowledgements}
The second author was supported by Capes, CNPq and the Alexander von Humboldt Foundation. Part of this work was developed while the second author was visiting the Departamento de Matem\'atica y Estad\'\i stica del Litoral in Salto, Uruguay, where some conversations with Mauricio Achigar happened.

\vspace{1.5cm}
\noindent

{\em A. Artigue and J. Vieitez}
\vspace{0.2cm}

\noindent

Departamento de Matem\'atica y Estad\'istica del Litoral,

Universidad de la Rep\'ublica,

Gral. Rivera 1350, Salto, Uruguay
\vspace{0.2cm}

\email{artigue@unorte.edu.uy}

\email{jvieitez@unorte.edu.uy}

\vspace{1.5cm}
\noindent

{\em B. Carvalho}
\vspace{0.2cm}

\noindent

Departamento de Matem\'atica,

Universidade Federal de Minas Gerais - UFMG

Av. Ant\^onio Carlos, 6627 - Campus Pampulha

Belo Horizonte - MG, Brazil.
\vspace{0.2cm}

Friedrich-Schiller-Universität Jena

Fakultät für Mathematik und Informatik

Ernst-Abbe-Platz 2

07743 Jena

\vspace{0.2cm}

\email{bmcarvalho@mat.ufmg.br}

\vspace{1.5cm}
\noindent

{\em W. Cordeiro}

\noindent

Institute of Mathematics, Polish Academy of Sciences

ul. \'Sniadeckich, 8

00-656 Warszawa - Poland

\vspace{0.2cm}

\email{wcordeiro@impan.pl}

\end{document}